\providecommand{\floor}[1]{\left\lfloor#1\right\rfloor}
\providecommand{\Z}{\mathbb{Z}}
\providecommand{\N}{\mathbb{N}}
\providecommand{\Q}{\mathbb{Q}}
\DeclareMathOperator{\rank}{rank}
\newtheorem{theorem}{Theorem}[section]
\newtheorem{lemma}[theorem]{Lemma}
\newtheorem{conjecture}[theorem]{Conjecture}
\newtheorem{claim}[theorem]{Claim}
\newtheorem{corollary}[theorem]{Corollary}
\theoremstyle{definition}
\newtheorem{definition}[theorem]{Definition}
\theoremstyle{remark}
\numberwithin{equation}{section}
\begin{document}

\title{A Rainbow Ramsey Analogue of Rado's Theorem}

\author{J. A. De Loera}
\author{R. N. La Haye}
\author{A. Montejano}
\author{D. Oliveros}
\author{E. Rold\'an-Pensado}

\address[J. A. De Loera, R. N. La Haye]{Department of Mathematics, UC Davis}
\email{deloera@math.ucdavis.edu, rlahaye@math.ucdavis.edu}
\address[A. Montejano]{Facultad de Ciencias, UNAM campus Juriquilla}
\email{amandamontejano@ciencias.unam.mx}
\address[D. Oliveros, E. Rold\'an-Pensado]{Instituto de Matem\'aticas, UNAM campus Juriquilla}
\email{dolivero@matem.unam.mx, e.roldan@im.unam.mx}

\begin{abstract}
We present a Rainbow Ramsey version of the well-known Ramsey-type theorem of Richard Rado. We use techniques from the Geometry of Numbers. We also disprove two conjectures proposed in the literature.
\end{abstract}

\maketitle

\section{Introduction}

Given a set $X$, a \emph{$k$-coloring} of $X$ is a surjective mapping $c:X\to \{1,2,...,k\}$, or equivalently a partition $X=C_1\cup C_2 \, ...\cup C_k$ into $k$ nonempty parts called \emph{color classes}. A subset $Y\subseteq X$ is called \emph{monochromatic} under $c$ if it is contained in a single color class. On the other hand, $Y$ is called \emph{rainbow} if the coloring assigns pairwise distinct colors to its elements. Given a coloring of a subset of the integers, we say that a vector is \emph{rainbow} if each of its entries is colored differently.

\emph{Arithmetic Ramsey Theory} concerns the study of the existence of monochromatic structures, like arithmetic progressions or solutions of linear equations, in every coloring of subsets of the integer numbers. The classical results in this area include Schur's Theorem: For every $k$, if $n$ is sufficiently large, every $k$-coloring of the starting segment of integers $[n]=\{1,2,...,n\}$ contains a monochromatic solution to the equation $x+y=z$. Another result is Van der Waerden's Theorem which states that for every pair of integers $t$ and $k$, when $n$ is sufficiently large, every $k$-coloring of $[n]$ contains a monochromatic $t$-term arithmetic progression. One of the most important examples is the famous 1933 theorem of Richard Rado: Given a rational matrix $A$, consider the homogeneous system of linear equations $Ax = 0$. This system or the matrix is called $k$-\emph{regular} if, for every $k$-coloring of the natural numbers, the system has a monochromatic solution. A matrix is \emph{regular} if it is $k$-regular for all $k$. Rado's Theorem characterizes precisely those matrices that are regular. The characterization depends on simple additive conditions satisfied by the columns of $A$ (which can be found in \cite{radoregular,goodbook}). In fact, Rado's Theorem is a common generalization of both Schur's and Van der Waerden's Theorems.

In contrast to Ramsey Theory, \emph{Rainbow Ramsey Theory} refers to the study of the existence of \emph{rainbow} structures in colored combinatorial universes under some density conditions on the coloring. Arithmetic versions of this theory have been recently studied by several authors concerning colorings of integer intervals or cyclic groups, showing the existence of rainbow arithmetic progressions or rainbow solutions to linear equations under some density conditions on the color classes \cite{fmr, rainbowsurvey, llm, ms, rainbowapk}. As pointed out in papers \cite{fmr, rainbowsurvey}, one natural research direction is to generalize the known monochromatic results to the case of rainbow solutions of systems of linear equations.
In particular the authors of \cite{fmr} stated that it would be very exciting to provide a complete rainbow analogue of Rado's Theorem. 
The key purpose of this paper is to provide such a theorem. As a consequence we disproved two conjectures \cite{rainbowsurvey,fmr}. Our techniques combine established combinatorial tools with ideas from convex geometry, particularly Ehrhart's Theory of lattice point counting \cite{BarviPom,beckrobins}.

\begin{definition}
A matrix $A$ with rational entries is \emph{rainbow partition $k$-regular} if for all $n$ and 
for every equinumerous $k$-coloring of $[kn]$ (i.e. $k$-colorings in which all color classes have size $n$), there exists a rainbow vector in $\ker(A)$.
The smallest $k$ such that $A$ is rainbow partition $k$-regular, if it exists, is called the \emph{rainbow number of $A$} and is denoted by $r(A)$.
A matrix $A$ is \emph{rainbow regular} if it is rainbow partition $k$-regular for all sufficiently large $k$.
\end{definition}

For instance, it is known that both matrices $$A_1=\begin{pmatrix}1 & -2 & 1 \end{pmatrix}\text{ and }A_2=\begin{pmatrix}1 & 1 & -1 & -1 \end{pmatrix},$$ corresponding to $3$-term arithmetic progressions and solutions to the Sidon equation, are rainbow regular matrices with rainbow numbers $r(A_1)=3$ and $r(A_2)=4$ respectively (see \cite{fmr, rainbowapk}).

The authors of \cite{fmr} and \cite{rainbowapk} claimed that every $1\times n$ matrix with nonzero rational entries is rainbow regular if and only if some of the entries have different signs. That is correct for $n\geq 3$, but incorrect for $n=2$. This subtle difference is key for finding the main theorem. Lemma \ref{lem:1x2} shows that all  rational nonzero $1\times2$ matrices are not rainbow regular and is used to prove Theorem \ref{thm:main}.

The papers \cite{fmr, rainbowapk} contain two different conjectures on the classification of rainbow regular matrices. However, both conjectures as originally stated have the trivial counterexample $\begin{pmatrix}1 & -1 & 0\end{pmatrix}$. To avoid this, we state them here in slightly modified versions.

\begin{conjecture}[Jungi\'c, Ne\v set\v ril, Radoi\v ci\'c \cite{rainbowsurvey}]
\label{wrongrainbow3}
A matrix $A$ with integer entries is rainbow regular if and only if there exist two linearly independent vectors with distinct positive integer entries in $\ker(A)$.
\end{conjecture}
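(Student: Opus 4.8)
The abstract announces that two conjectures are \emph{disproved}, and the excerpt explicitly flags the $1\times 2$ case as the ``subtle difference [that] is key.'' So I would not try to prove Conjecture~\ref{wrongrainbow3}; instead I would refute it by exhibiting a matrix $A$ whose kernel contains two linearly independent vectors with distinct positive integer entries but which is \emph{not} rainbow regular, breaking the ``$\Leftarrow$'' direction. The idea is that $\ker(A)$ can force a fixed non-unit ratio between two coordinates --- a hidden $1\times 2$ relation --- and, by Lemma~\ref{lem:1x2}, such a relation never has to be rainbow.

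A clean choice is the block matrix
\[
A=\begin{pmatrix}1 & -2 & 0 & 0\\ 0 & 0 & 1 & -2\end{pmatrix}
\]
(even $\begin{pmatrix}1 & -2 & 0\end{pmatrix}$ works, and a row operation yields a counterexample with no zero entries). First I would check the hypothesis: $\ker(A)$ is spanned by $(2,1,0,0)$ and $(0,0,2,1)$, and the integer vectors $(6,3,2,1)$ and $(2,1,6,3)$ are linearly independent with all entries distinct and positive --- routine. Next I would show $A$ is not rainbow regular: every $w\in\ker(A)$ has $w_1=2w_2$, so any rainbow vector of $\ker(A)$ would in particular give $w_1,w_2\in[kn]$ with $w_1=2w_2$ and $c(w_1)\neq c(w_2)$, i.e.\ a rainbow solution of the $1\times 2$ equation $x=2y$. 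By Lemma~\ref{lem:1x2} the matrix $\begin{pmatrix}1 & -2\end{pmatrix}$ is not rainbow regular, so for arbitrarily large $k$ there are an $n$ and an equinumerous $k$-coloring of $[kn]$ with no such rainbow solution; feeding that coloring to $A$ leaves no rainbow vector in $\ker(A)$, so $A$ is not rainbow partition $k$-regular for arbitrarily large $k$. This contradicts Conjecture~\ref{wrongrainbow3}.

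Granting Lemma~\ref{lem:1x2}, this is short, so the genuine obstacle lives inside that lemma: for arbitrarily large $k$ and suitable $n$ one must build an equinumerous $k$-coloring of $[kn]$ in which every pair $\{m,2m\}$ is monochromatic, equivalently partition the chains $\{m,2m,4m,\dots\}\cap[kn]$ into $k$ blocks of equal total size $n$ --- which one handles using the abundance of singleton chains. With the counterexample in hand the conjecture collapses, and the correct statement is the classification of Theorem~\ref{thm:main}.
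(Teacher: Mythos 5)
Your refutation is correct and takes essentially the same route as the paper: the authors' counterexample is $\begin{pmatrix}a & -b & 0\end{pmatrix}$ with $a\neq b$ positive integers, which fails to be rainbow regular for exactly the reason you give --- the kernel forces a fixed ratio between two coordinates, reducing to the non-rainbow-regularity of a $1\times 2$ matrix via Lemma~\ref{lem:1x2}. You correctly locate the real work inside that lemma (equinumerously coloring the chains $\{m,2m,4m,\dots\}$ using the abundance of singletons), which is where the paper's effort goes as well.
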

\begin{conjecture}[Fox, Mahdian, Radoi\v ci\'c \cite{fmr}]
\label{wrongrainbow4}
A matrix $A$ with integer entries is rainbow regular if and only if the rows of $A$ are linearly independent and $\ker(A)$ contains a vector with distinct positive integer entries.
\end{conjecture}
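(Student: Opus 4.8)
The statement as written is \emph{false}, so the plan is not to prove it but to refute it with an explicit counterexample, exactly as the introduction promises. The search is steered by the remark just before Lemma~\ref{lem:1x2}: the claimed characterization for $1\times n$ matrices holds only when $n\geq 3$, so a counterexample should be hiding among $1\times 2$ matrices.

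Take $A=\begin{pmatrix}1 & -2\end{pmatrix}$ (any $\begin{pmatrix}a & -b\end{pmatrix}$ with $a,b$ positive integers and $a\neq b$ works identically). Its single row is nonzero, so the rows of $A$ are trivially linearly independent, and $\ker(A)=\{\,t(2,1):t\in\R\,\}$ contains $(2,1)$, whose entries are distinct positive integers. Thus $A$ satisfies \emph{both} hypotheses of Conjecture~\ref{wrongrainbow4}, which therefore predicts that $A$ is rainbow regular; but $A$ is a nonzero rational $1\times 2$ matrix, so by Lemma~\ref{lem:1x2} it is \emph{not} rainbow regular. This contradicts the claim that the two conditions are sufficient for rainbow regularity, and Conjecture~\ref{wrongrainbow4} is disproved. (Padding with zero columns yields counterexamples of every width, e.g.\ $\begin{pmatrix}1 & -2 & 0\end{pmatrix}$.)

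Consequently all the real content is pushed into Lemma~\ref{lem:1x2}, and that is where I expect the only genuine obstacle. To prove it one clears denominators and reduces to $A=\begin{pmatrix}a & -b\end{pmatrix}$ with $a,b$ coprime positive integers (the equal-sign case is immediate: then every nonzero kernel vector has entries of opposite sign, so no solution lies in $[kn]\times[kn]$). The task is to produce, for arbitrarily large $k$, an equinumerous $k$-coloring of some $[kn]$ with no rainbow solution of $ax=by$. The natural route is to pass to the equivalence relation on $[kn]$ generated by $am\sim bm$ for $m=1,2,\dots$, to observe that a coloring avoids rainbow solutions of $ax=by$ precisely when every color class is a union of these equivalence classes, and then to choose $n$ large and split the classes into $k$ groups each of total size $n$ by a subset-sum / counting argument — this is where the Ehrhart-type lattice-point estimates used elsewhere in the paper enter. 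The delicate point is guaranteeing that this grouping is an honest equinumerous coloring (all $k$ parts nonempty, each of size exactly $n$) while respecting the equivalence-class constraint, which forces the admissible values of $n$ and requires controlling the multiset of class sizes; the rest is bookkeeping.
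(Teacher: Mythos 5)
Your refutation is correct and is essentially the paper's own: the counterexample the paper offers to Conjecture~\ref{wrongrainbow4} is $\begin{pmatrix}a & -b & 0\end{pmatrix}$ with $a,b$ positive integers (and, implicitly, $a\neq b$ so that the kernel contains a vector with \emph{distinct} positive entries), i.e.\ exactly your matrix padded with a zero column, and its failure of rainbow regularity via condition \emph{(iv)} of Theorem~\ref{thm:main} reduces to the same $1\times 2$ instance that Lemma~\ref{lem:1x2} handles, so citing that lemma directly is legitimate. One correction to your sketch of Lemma~\ref{lem:1x2}: the paper does not use Ehrhart-type or subset-sum estimates there; after forming the equivalence classes $\left\{a^\alpha c, a^{\alpha-1}bc,\dots,b^\alpha c\right\}\cap[N]$ it colors them greedily (largest uncolored class gets a least-used color) and proves equinumerosity with two elementary counts, namely that every class has size at most $1+\log_b(N)$ while $\Omega(N)$ classes are singletons, which together rule out any color being overfilled.
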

 
After finding counterexamples to the above conjectures, we were able to obtain a Rado-style classification theorem of rainbow regular matrices. Moreover, the definition of rainbow regularity is stronger than it seems. We show that if $A$ is rainbow regular, it satisfies a stronger version of rainbow regularity where the equinumerous condition is relaxed.


\begin{definition}
A matrix $A$ with rational entries is \emph{robustly rainbow regular} if there exists some constant $C$, depending only on $A$ such that for every $\varepsilon>0$, positive integer $N$, and large enough integer $k$, the following holds:
For every $k$-coloring of $[N]$ in which each color class contains at most $(C-\varepsilon)\frac N{\sqrt k}$ elements, there is a rainbow vector in $\ker(A)$.
\end{definition}

Note that (robust) rainbow regularity is actually a property of the kernel rather that the matrix. 

\pagebreak

In our main theorem below, we classify rainbow regularity in terms of both the matrix $A$ and its kernel. 

\begin{theorem}\label{thm:main}
Let $A$ be a $m \times d$ rational matrix. The following conditions are equivalent.
\begin{enumerate}[(i)]
\item $A$ is rainbow regular.
\item $A$ is robustly rainbow regular.
\item There exists a vector in $\ker(A)$ with positive integer entries, and every submatrix of $A$ obtained by deleting two columns has the same rank as $A$.
\item There exists at least one vector in $\ker(A)$ with positive integer entries, and for every pair of distinct indices $(i,j)$, there exists a pair of vectors $x=(x_1,\dots,x_d)$ and $y=(y_1,\dots,y_d)$ in $\ker(A)$ such that $x_iy_j\neq x_jy_i$.
\end{enumerate}
\end{theorem}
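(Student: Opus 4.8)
The plan is to prove the cycle $(ii)\Rightarrow(i)\Rightarrow(iii)\Rightarrow(ii)$ together with the purely linear-algebraic equivalence $(iii)\Leftrightarrow(iv)$. For the latter, fix distinct indices $i,j$ and let $\pi_{ij}\colon\ker(A)\to\R^2$ be the projection onto coordinates $i,j$. The kernel of the matrix $A'$ obtained from $A$ by deleting columns $i$ and $j$ is, after padding with zeros, exactly $\{v\in\ker(A):v_i=v_j=0\}$, so $\dim\ker(A')=\dim\ker(A)-\dim\pi_{ij}(\ker A)$; combined with $\rank(A')=(d-2)-\dim\ker(A')$ and $\rank(A)=d-\dim\ker(A)$ this gives $\rank(A')=\rank(A)$ iff $\dim\pi_{ij}(\ker A)=2$, i.e. iff some $x,y\in\ker(A)$ satisfy $x_iy_j\neq x_jy_i$. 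Quantifying over all pairs yields $(iii)\Leftrightarrow(iv)$. The implication $(ii)\Rightarrow(i)$ is immediate: an equinumerous $k$-colouring of $[kn]$ has all classes of size $n$, and $n\le(C-\varepsilon)\tfrac{kn}{\sqrt k}=(C-\varepsilon)\sqrt k\,n$ once $k\ge(C-\varepsilon)^{-2}$, so robust rainbow regularity, applied with $N=kn$ and any fixed $\varepsilon<C$, supplies a rainbow vector in $\ker(A)$ for all large $k$.

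For $(i)\Rightarrow(iii)$ I would argue by contraposition, producing in each failure case a colouring admitting no rainbow kernel vector. If $\ker(A)$ has no vector with all entries positive, then since every coordinate of a rainbow vector lies in $[N]$ and is therefore at least $1$, no vector in $\ker(A)$ is rainbow under any colouring, so $A$ is not rainbow regular. If instead deleting two columns $i,j$ drops the rank, then by the equivalence above $\pi_{ij}(\ker A)$ lies in a line through the origin, which is rational since $A$ is; write it as $\{(s,t):qs=pt\}$ with $p,q\in\Z$ not both zero. If $pq\le 0$ this line meets the open positive quadrant only at the origin, so no rainbow vector can have $v_i,v_j>0$. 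If $p,q>0$, then every $v\in\ker(A)$ satisfies $qv_i=pv_j$, and Lemma~\ref{lem:1x2}, applied to the nonzero $1\times2$ matrix $\begin{pmatrix}q&-p\end{pmatrix}$, yields for arbitrarily large $k$ an equinumerous $k$-colouring of some $[kn]$ in which no solution of $qx=py$ is rainbow; under this colouring $v_i$ and $v_j$ always receive the same colour, so once more $\ker(A)$ has no rainbow vector. In all cases $A$ fails to be rainbow regular.

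The substantial step is $(iii)\Rightarrow(ii)$, via Ehrhart theory and elementary geometry of numbers. Put $r=\rank(A)$, so $\dim\ker(A)=d-r$, and note that (iii) forces $d-r\ge2$ and that for every pair $(i,j)$ the subspace $W_{ij}=\{w\in\ker(A):w_i=w_j=0\}$ has dimension $d-r-2$. Let $\Lambda=\ker(A)\cap\Z^d$. Since $\ker(A)$ meets the open positive orthant (here the positive kernel vector of (iii) is used), the polytope $P_N=\{v\in\ker(A):1\le v_i\le N\ \forall i\}$ is full-dimensional for large $N$, and Ehrhart's theorem gives $\#(\Lambda\cap P_N)=VN^{d-r}+O(N^{d-r-1})$ with $V>0$ (the $\Lambda$-normalised volume of $\{v\in\ker(A):0\le v_i\le1\}$). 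Now fix a colouring of $[N]$ with every class of size at most $M$. A vector $v\in\Lambda\cap P_N$ fails to be rainbow iff $c(v_i)=c(v_j)$ for some $i\neq j$; for a fixed pair $(i,j)$ and colour class $Q$, such a $v$ with $v_i,v_j\in Q$ has $(v_i,v_j)$ equal to one of $\le|Q|^2$ values, and for each fixed $(s,t)$ the fibre $\{v\in P_N:v_i=s,\ v_j=t\}$ is a translate of $W_{ij}$ meeting $[1,N]^d$, hence contains at most $F_{ij}N^{d-r-2}$ points of $\Lambda$ for a constant depending only on $A$, uniformly in $(s,t)$. Since $\sum_Q|Q|^2\le M\sum_Q|Q|=MN$, summing over classes and over the $\binom d2$ pairs bounds the number of non-rainbow vectors in $\Lambda\cap P_N$ by $\big(\max_{ij}F_{ij}\big)\binom d2\,MN^{d-r-1}$. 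Choosing $C$ so that $2C\binom d2\max_{ij}F_{ij}<V$, any colouring with $M\le(C-\varepsilon)\tfrac N{\sqrt k}\le CN$ and $N$ large leaves strictly fewer non-rainbow vectors than the $VN^{d-r}(1+o(1))$ kernel points of $P_N$, so a rainbow vector survives; the finitely many small $N$ for which $(C-\varepsilon)\tfrac N{\sqrt k}<1$ are vacuous. As $V$ and the $F_{ij}$ depend only on $A$, so does $C$, which is (ii).

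I expect $(iii)\Rightarrow(ii)$ to be the crux. Two points need care: the Ehrhart asymptotics for $\#(\Lambda\cap P_N)$ with \emph{strictly positive} leading coefficient $V$ — which is exactly where the existence of a positive kernel vector enters, guaranteeing $P_N$ is full-dimensional — and the fibre estimate $F_{ij}N^{d-r-2}$ \emph{uniformly} over all $(s,t)$, since it is this uniformity that lets the union bound over colour classes collapse to the single quantity $\sum_Q|Q|^2$ and thereby import the hypothesis on class sizes. The rank condition is needed here to ensure that every fibre has the expected codimension $2$, so the fibre bound carries the correct power of $N$; I would isolate both facts (the Ehrhart leading term and the uniform slice count) as preliminary lemmas before assembling the counting argument.
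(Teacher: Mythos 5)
Your proposal is correct and follows essentially the same route as the paper: the negative direction rests on Lemma \ref{lem:1x2} applied to the induced $1\times 2$ relation between the two offending coordinates, and the positive direction is the same Ehrhart-theoretic comparison of kernel lattice points in $[N]^d$ against a union bound over same-coloured pairs, with the rank condition supplying the codimension-$2$ fibres. The only differences are organizational: you close the cycle as $(ii)\Rightarrow(i)\Rightarrow(iii)\Rightarrow(ii)$ with a standalone two-way linear-algebra proof of $(iii)\Leftrightarrow(iv)$ via the projection $\pi_{ij}$, whereas the paper runs $(i)\Rightarrow(iv)\Rightarrow(iii)\Rightarrow(ii)\Rightarrow(i)$ and proves only the direction $(iv)\Rightarrow(iii)$ directly, via a row combination supported on columns $i,j$.
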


From Theorem \ref{thm:main} the reader can easily see that the matrix $\begin{pmatrix}a & -b & 0\end{pmatrix}$, with $a,b$ positive integers gives a counterexample to Conjectures \ref{wrongrainbow3} and \ref{wrongrainbow4}. However, not all counterexamples are this simple. For instance, the matrix
$$\begin{pmatrix}
 1 & 0 & 1 & -1 & 0 \\
 0 & 1 & 1 & 0 & -1 \\
 1 & 0 & 0 & 1 & -1
\end{pmatrix}$$
has a kernel generated by $(1,2,3,4,5)$ and $(1,2,4,5,6)$ but is not rainbow regular.

In the next section we give the proof of Theorem \ref{thm:main} and we prove the following corollary about $k$-colorings of $\N$ instead of $[N]$. 

\begin{corollary}\label{thm:cor1}
Given a rational rainbow regular $m\times d$ matrix $A$ there exists a constant $C$, depending only on $A$, that satisfies the following:
For every $k$-coloring of $\N$ with each color class having upper density less than $\frac{C}{\sqrt k}$, there is a rainbow vector in $\ker(A)$.
\end{corollary}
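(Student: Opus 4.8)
The plan is to bootstrap the corollary from the implication (i)$\Rightarrow$(ii) of Theorem~\ref{thm:main}. Since $A$ is rainbow regular it is robustly rainbow regular, so fix the constant $C_0$ and, for the particular choice $\varepsilon_0:=C_0/2$, let $k_0$ be the corresponding threshold: every $k$-coloring of any $[N]$ with $k\ge k_0$ all of whose color classes have size at most $(C_0-\varepsilon_0)\frac{N}{\sqrt{k}}=\frac{C_0}{2\sqrt{k}}N$ contains a rainbow vector in $\ker(A)$. Define the constant of the corollary by $C:=\min\!\left\{\tfrac{C_0}{2},\ \tfrac{1}{\sqrt{k_0}}\right\}$.

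Now let $c\colon\N\to\{1,\dots,k\}$ be a $k$-coloring with classes $C_1,\dots,C_k$, each of upper density strictly less than $C/\sqrt{k}$. The first step is to observe that this already forces $k$ to be large. Indeed $\abs{C_1\cap[N]}+\dots+\abs{C_k\cap[N]}=N$ for every $N$, so dividing by $N$ and taking $\limsup$ gives $\sum_{i=1}^k \limsup_{N\to\infty}\frac{\abs{C_i\cap[N]}}{N}\ge 1$; since each of the $k$ summands is less than $C/\sqrt{k}$, this yields $1< k\cdot\frac{C}{\sqrt{k}}=C\sqrt{k}$, hence $k>1/C^2\ge k_0$. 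Thus $k\ge k_0$ holds automatically whenever the hypothesis is nonvacuous.

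The second step transfers the density bound on $\N$ into a cardinality bound on a suitable initial interval. For each $i$, the inequality $\limsup_{N}\frac{\abs{C_i\cap[N]}}{N}<\frac{C}{\sqrt{k}}$ provides an $N_i$ with $\abs{C_i\cap[N]}<\frac{C}{\sqrt{k}}N$ for all $N\ge N_i$; also choose $m_i\in C_i$ (each class is nonempty). Let $N:=\max\{N_1,\dots,N_k,m_1,\dots,m_k\}$. Then the restriction of $c$ to $[N]$ is a genuine (surjective) $k$-coloring of $[N]$, and every class satisfies $\abs{C_i\cap[N]}<\frac{C}{\sqrt{k}}N\le\frac{C_0}{2\sqrt{k}}N=(C_0-\varepsilon_0)\frac{N}{\sqrt{k}}$. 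Since $k\ge k_0$, robust rainbow regularity applies to this coloring of $[N]$ and produces a rainbow vector $x\in\ker(A)$ with entries in $[N]$. Because the colors assigned to the entries of $x$ are the same under $c$ and under its restriction to $[N]$, the vector $x$ is rainbow for $c$, which is what we want.

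The argument is short, and I do not anticipate a genuine obstacle; the one point needing care is the clause ``large enough $k$'' in the definition of robust rainbow regularity. Since $k$ is prescribed in the corollary we cannot adjust it, but the density hypothesis itself forces $k>1/C^2$, and taking $C\le 1/\sqrt{k_0}$ makes this exceed the robust threshold. The complementary technical point is that the slack $C_0-C\ge C_0/2$ between the corollary's constant and the robust constant is a fixed quantity, so $k_0$ can be taken to depend only on $A$ (through $\varepsilon_0=C_0/2$) and not on the coloring or on $N$ — this is precisely why $C$ must be chosen strictly below $C_0$.
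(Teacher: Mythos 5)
Your proof is correct and follows the same route as the paper: pass from the upper-density hypothesis to a cardinality bound on a sufficiently large initial segment $[N]$ and invoke robust rainbow regularity from Theorem \ref{thm:main}. The only difference is that you treat two points the paper glosses over --- shrinking $C$ to $\min\{C_0/2,1/\sqrt{k_0}\}$ so that the density hypothesis automatically forces $k$ past the robustness threshold, and choosing $N$ large enough that the restricted coloring is surjective --- both of which are legitimate refinements rather than a different argument.
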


Ramsey theory has also been used in graph theory, rainbow Ramsey theory is no different and we state a corollary to Theorem \ref{thm:main}
that describes the properties of graphs and it suggests an interesting connection to the theory of no-where-zero flows on graphs (see \cite{diestel,sixflow} and references therein).
 For the sake of completeness  we recall some definitions and well known facts. A graph is called   \emph{$k$-edge-connected} if has no edge cut of cardinality less that $k$. A \emph{$k$-flow} of an oriented graph $G=(V,E)$ is an integer  function $\phi$ on $E$ such that $-k< \phi(e) < k$ for all $e\in E$, and  satisfies the Kirchhoff's law, that is $\sum_{e \in \delta^+(v)} \phi(e) = \sum_{e \in \delta^-(v)} \phi(e)$ for each $v\in V$. If a graph has a $k$-flow, then it has a positive $k$-flow.  If in addition  $\phi (e)\neq 0$ for every $e\in E$ we call $\phi$ a  \emph{nowhere-zero-$k$-flow}. For a given oriented graph $G=(V,E)$ with $|V|=n$ and $|E|=m$ we consider  the incidence matrix $M$ which is a $n\times m$ matrix. The rank of $M$ is $n-c$ where $c$ is the number of connected components of $G$.  Theorem \ref{thm:main} has the following graph theoretic corollary (for a monochromatic analogue see \cite{hogben}).

\begin{corollary} \label{thm:corgraph}
The connected components of a graph $G$ are all 3-edge-connected if and only if for some orientation of $G$
there exists a constant $C$ depending only on the graph such that for every $\varepsilon>0$, positive integer $N$, and large enough integer $k$,  it follows that: for every $k$-coloring of $[N]$ in which each color class contains at most $(C-\varepsilon)\frac{N}{\sqrt k}$ elements, there is a rainbow flow on that orientation of $G$.
\end{corollary}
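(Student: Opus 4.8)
The plan is to derive Corollary~\ref{thm:corgraph} from Theorem~\ref{thm:main} applied to the incidence matrix $M$ of an orientation of $G$. The statement of the corollary is exactly the assertion that some orientation of $G$ has a robustly rainbow regular incidence matrix, so by the equivalence (ii)$\iff$(iii) in Theorem~\ref{thm:main} it suffices to prove: \emph{the connected components of $G$ are all $3$-edge-connected if and only if some orientation of $G$ has an incidence matrix $M$ such that (a) $\ker(M)$ contains a vector with positive integer entries and (b) deleting any two columns of $M$ does not drop the rank.} The first thing I would note is that both conditions (a) and (b), and the $3$-edge-connectivity hypothesis, are insensitive to which orientation we pick: reversing an edge multiplies the corresponding column of $M$ (and coordinate of $\ker$) by $-1$, which changes neither ranks of submatrices nor the existence of a strictly positive kernel vector up to sign flips. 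In fact condition (a) for \emph{some} orientation is equivalent to the existence of a nowhere-zero integer flow for some (hence, after reorienting, a positive nowhere-zero flow) on a fixed orientation, so I can work with one arbitrary but fixed orientation throughout.

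Next I would translate (a) and (b) into flow/connectivity language using the standard dictionary between the incidence matrix and the cycle space. The kernel of $M$ is precisely the cycle space of $G$ (the flows satisfying Kirchhoff's law), so (a) says $G$ has a nowhere-zero integer flow; by Tutte's theorem this is equivalent to $G$ having a nowhere-zero $k$-flow for some $k$, which in turn is equivalent to $G$ being bridgeless, i.e.\ $2$-edge-connected on each component (a bridge $e$ forces $\phi(e)=0$ in any flow; conversely a bridgeless graph has a nowhere-zero $6$-flow by Seymour, or more elementarily admits \emph{some} integer nowhere-zero flow via ear decompositions). For condition (b): deleting a column of $M$ corresponds to deleting an edge $e$, and $\rank M$ drops exactly when $e$ is a bridge; so deleting two columns $e,f$ preserves the rank iff neither $e$ nor $f$ is a bridge of $G$ and, after deleting one of them, the other is still not a bridge — equivalently, $\{e,f\}$ is not an edge cut and neither $e$ nor $f$ alone is a cut. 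Spelled out, (b) says $G$ has no edge cut of size $\le 2$ in any component, i.e.\ every connected component is $3$-edge-connected. The rank bookkeeping here (using $\rank M = n - c$ and the fact that deleting edges increases the number of components by at most one, doing so precisely when the edge is a bridge of the current graph) is the one genuinely fiddly step, but it is routine linear algebra on incidence matrices.

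Assembling these: condition (b) already forces every component to be $3$-edge-connected, and $3$-edge-connected (in particular bridgeless) components automatically satisfy (a) since a bridgeless graph has a nowhere-zero integer flow. Conversely, if every component of $G$ is $3$-edge-connected then (b) holds by the edge-cut characterization above, and (a) holds because such graphs are bridgeless. Hence the pair of conditions (a)$\wedge$(b) holds for some (equivalently every) orientation if and only if all components of $G$ are $3$-edge-connected, which by Theorem~\ref{thm:main}(ii)$\iff$(iii) is exactly the robust rainbow regularity statement in the corollary, with the rainbow vector in $\ker(M)$ being by definition a nowhere-zero flow on the oriented graph. The main obstacle is not conceptual but bookkeeping: one must be careful that condition (b) of Theorem~\ref{thm:main} is about deleting \emph{two} columns, so I must verify that ``no edge cut of size $\le 2$'' is genuinely equivalent to ``every two-column deletion preserves rank,'' handling the degenerate cases where the two deleted edges are parallel, share an endpoint, or lie in different components; and I should state explicitly that the corollary quietly uses the orientation-independence remarks so that ``for some orientation'' in the corollary matches ``for the fixed orientation'' in the argument.
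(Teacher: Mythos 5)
Your proposal is correct and follows essentially the same route as the paper: reduce to the equivalence (ii)$\iff$(iii) of Theorem~\ref{thm:main} for the incidence matrix, obtain the positive integer kernel vector from bridgelessness via Seymour's nowhere-zero $6$-flow theorem (choosing the orientation that makes the flow positive), and translate ``deleting two columns preserves rank'' into ``deleting two edges preserves the number of components,'' i.e.\ $3$-edge-connectivity of each component via $\rank M = n - c$. Your extra remarks on orientation-independence and the degenerate edge-pair cases are careful bookkeeping the paper leaves implicit, but the argument is the same.
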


In the third section we look at the matrices which give rise to 
Fibonacci sequences; we use Theorem \ref{thm:main} to show that they are rainbow regular and give a bound for 
their rainbow number.

\section{Proof of Theorem \ref{thm:main} and its corollaries}
\label{arrct}

We start this section by considering the simplest case. We show that for any $1\times 2$ matrix $A$ there is an equinumerous $k$-coloring of $[kn]$, for sufficiently large $k$ and $n$, without rainbow vectors in the kernel of $A$. This case will later be used in the proof of the main theorem.

\begin{lemma}\label{lem:1x2}
If $A$ is a nonzero rational $1\times2$ matrix then $A$ is not rainbow regular.
\end{lemma}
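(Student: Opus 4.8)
The plan is to normalize the matrix and then build the bad colouring by hand. After clearing denominators we may assume $A=(a,b)$ with $a,b\in\Z$ not both zero. If $a=0$, $b=0$, or $a$ and $b$ have the same sign, then every vector of $\ker(A)$ has a coordinate that is $\le 0$, so no vector of $\ker(A)$ can lie in $[kn]^2$, let alone be rainbow, and $A$ is trivially not rainbow regular. Otherwise $\ker(A)$ is spanned by a vector with coprime positive coordinates, so after scaling $A\sim(p,-q)$ with $\gcd(p,q)=1$ and $p\ge q\ge1$; if $p=q$ then $p=q=1$ and every vector of $\ker(A)$ has equal coordinates, hence is never rainbow, and again we are done trivially. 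So assume $p>q\ge1$ and $\gcd(p,q)=1$. A short divisibility check then gives $\ker(A)\cap[kn]^2=\{(qt,pt):1\le t\le\floor{kn/p}\}$, so a $k$-colouring $c$ of $[kn]$ has \emph{no} rainbow vector in $\ker(A)$ precisely when $c(qt)=c(pt)$ for all $t$ with $1\le t\le\floor{kn/p}$. The task is therefore to produce, for suitable large $k$ and $n$, an equinumerous $k$-colouring of $[kn]$ meeting all of these equalities.

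I would study the ``constraint graph'' $G$ with vertex set $[kn]$ and edges $\{\{qt,pt\}:1\le t\le\floor{kn/p}\}$: what is wanted is a colouring constant on the connected components of $G$. The point is that $G$ is extremely sparse and simple. Each vertex $v$ has at most one neighbour below it (namely $vq/p$, present iff $p\mid v$) and at most one above it (namely $vp/q$, present iff $q\mid v$ and $vp/q\le kn$), so each component is a geometric chain $m,\,m(p/q),\,m(p/q)^2,\dots$ inside $[kn]$, and in particular has at most $\log_{p/q}(kn)+1$ vertices. Moreover $G$ is a forest with exactly $\floor{kn/p}$ edges, hence $kn-\floor{kn/p}$ components, and a count of the non-isolated vertices (they lie among the multiples of $p$ in $[kn]$ together with the multiples of $q$ in $[\,knq/p\,]$) shows that a fixed positive fraction of the vertices --- at least $\tfrac14 kn-O(1)$ of them --- are isolated, i.e. singleton components. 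Constructing the colouring now reduces to a pure combinatorial packing problem: partition the multiset $S$ of component sizes (which sums to $kn$, has maximum $O(\log(kn))$, and contains at least $\tfrac14 kn-O(1)$ ones) into $k$ blocks each of sum exactly $n$, then colour each component by the index of its block.

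To finish I would fix $n$ large compared with $k$ so that the largest component size $L$ satisfies, say, $L\le n/8$ (possible since $L$ grows only logarithmically in $n$, while we are free to take $n$ as large as we like in terms of $k$). Pack the non-singleton components greedily into bins of capacity $n$; each time a bin is closed its load exceeds $n-L$, so the number of bins used is at most $(kn-U)/(n-L+1)+1$ where $U\ge\tfrac14 kn-O(1)$ counts the singletons, and for $k$ large this is at most $k$. The total unused capacity in the opened bins together with the capacity of the remaining empty bins is then exactly $U$, so we fill those with the $U$ singleton components, obtaining a partition of $S$ into $k$ blocks of sum $n$. This produces an equinumerous $k$-colouring of $[kn]$ with no rainbow vector in $\ker(A)$, so $A$ is not rainbow partition $k$-regular for all sufficiently large $k$, hence not rainbow regular. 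The one delicate step --- and the reason we need both a logarithmic bound on the component sizes and a linear lower bound on the number of singletons --- is checking that the greedy packing of the non-singleton components never overflows into more than $k$ bins; the rest is bookkeeping.
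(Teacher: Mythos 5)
Your proposal is correct and follows essentially the same route as the paper: you decompose $[kn]$ into the geometric chains $m, m(p/q), m(p/q)^2,\dots$ that must be monochromatic (the paper's classes $\{a^\alpha c, a^{\alpha-1}bc,\dots\}$), bound their length by $O(\log(kn))$, show that a positive fraction of elements lie in singleton chains, and use these two facts to balance the color classes. The only cosmetic differences are in the balancing step (your first-fit packing of non-singletons followed by topping up with singletons, versus the paper's ``least-used color to largest class'' greedy analyzed by contradiction) and in the choice of parameters ($n\gg k$ rather than $n=k$), neither of which changes the substance.
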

\begin{proof}

Assume $A=\begin{pmatrix}p&q\end{pmatrix}$ with $p,q\in\Q$.
Then $\ker(A)$ is generated by some vector $(a,b)$.

If either $p$ or $q$ are equal to $0$ then either $a$ or $b$ equals $0$, thus $\ker(A)$ contains no vectors with positive integer entries. The same conclusion holds if $p$ and $q$ have the same sign. Therefore, in both cases, the matrix $A$ is not rainbow regular.

Assume $p$ and $q$ are nonzero and have opposite signs.
If $p=-q$, then $\ker(A)$ is generated by $(1,1)$ and $A$ is not rainbow regular. So we may assume that $p\neq-q$, $a$ and $b$ are relatively prime, and $a<b$.

Let $N=nk$. In order to define an equinumerous coloring without rainbow solutions, we give a partition $\mathcal P$ of $[N]$. Each of its classes will be monochromatic. So $i$ must be in the same class as $j$ if either $(i,j)$ or $(j,i)$ are in $\ker(A)$, i.e., $ai=bj$ or $bi=aj$.

Since $a$ and $b$ are relatively prime, every integer can be written uniquely as $a^\alpha b^\beta c$ where $c$ is not divisible by neither $a$ or $b$. The equivalence classes are of the form
\begin{equation*}
\left\{a^\alpha b^0 c,a^{\alpha-1} b^1 c,\dots,a^0 b^\alpha c\right\}\cap[N].
\end{equation*}

As mentioned before, all the elements in each class will be colored with the same color.
Now, using a greedy algorithm, we can define the $k$-coloring of $[N]$:
In each step, assign a least used color to a largest uncolored class. For a step-by-step example of our coloring procedure see Figure \ref{N12}.

\begin{figure}
\includegraphics{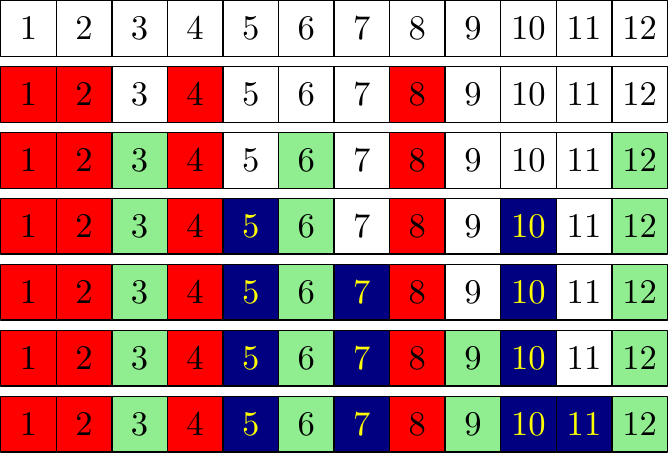}
\caption{An explicit coloring for $N=12$ showing that the matrix $\big(1\quad -2\big)$ is not rainbow partition 3-regular.}
\label{N12}
\end{figure}

All that remains is to show that this is an equinumerous $k$-coloring. To do this we need some bounds related to $\mathcal P$; specifically, on the cardinality of the largest class and the number of classes with one element.

Observe that each class can be represented by its smallest element. The set of these representative elements is precisely the set of integers in $[N]$ which are not divisible by $b$.
Note that the largest element of any class in $\mathcal P$ is of the form $a^{\gamma}b^\beta c$ with $a^{\gamma}b^\beta c\le N$, and the number of elements of this class is $\beta+1$. So clearly the maximum cardinality of any class is at most $1+\log_b(N)$.

We proceed by counting the number of classes in $\mathcal P$ with cardinality one. These classes are represented by $a^\alpha c$ with either $\alpha=0$ or $a^{\alpha-1}bc>N$. Here we will assume that both $k$ and $n$ are equal and large compared to $a$ and $b$, so we can find asymptotic bounds up to multiplicative constants depending only on $a$ and $b$.

The number of classes corresponding to $\alpha=0$ are
$$\floor{\frac{N}{a}}+\floor{\frac{N}{b}}-\floor{\frac{N}{ab}}=\Omega(N).$$
The classes corresponding to $a^{\alpha-1}bc>N$ are in bijection with the elements $c\in\left(\frac{a}{b}N,N\right]\setminus b\Z$. The number of those classes is approximately
$$\frac{b-1}{b}\left(N-\frac{a}{b}N\right)=\Omega(N).$$
Thus the number of classes with cardinality one is $\Omega(N)$.

Assume that the coloring defined above is not equinumerous. 
Then at some point during the greedy algorithm, a color becomes used more than $n$ times.
Consider the first time that this happens: some equivalence class of size $m$ is assigned a color 
that has already been used at least $n-m+1$ times. Note that $m>1$, otherwise we would have already finished coloring.
Because on each step we use the least used color, each color has been already used at least $n-m+1$ times.
It follows that $k(n-m+1)\ge N-k(1+\log_b(N))$ integers have been colored.
Therefore at most $k(1+\log_b(N))$ elements of $[N]$ remain uncolored, but we have not colored any class of size one yet. Therefore there are at least $\Omega(N)$ uncolored integers. This gives
$$\Omega(N)\le k(1+\log_b(N))=O\left(N^{1/2}\log(N)\right),$$
a contradiction.

Therefore, for sufficiently large $k=n$, the coloring is equinumerous and thus $A$ is not rainbow $k$-regular.
\end{proof}

Now we are ready to prove Theorem \ref{thm:main}. We point out that implication $(iii)\implies (ii)$ is based on a proof of \cite{rainbowapk}, but with new ideas from the theory of lattice points inside polyhedra started E. Ehrhart \cite{BarviPom,beckrobins}.

\begin{proof}[Proof of Theorem \ref{thm:main}]\mbox{}

\noindent\emph{(i)$\implies$(iv):}
We show the contrapositive. If $\ker(A)$ contains no vectors with positive integer entries or there exist indices $i\neq j$ such that  $x_iy_j=x_jy_i$ for all $x,y\in\ker(A)$, then $A$ is not rainbow regular. 

If $\ker(A)$ contains no vector with all positive integer entries then $A$ is clearly not rainbow regular, 
so we assume that there exists $q\in\ker(A)$ with positive integer entries and that for every $x\in\ker(A)$, $x_iq_j=x_jq_i$. 

So $\begin{pmatrix}q_j & -q_i\end{pmatrix}$ is a $1\times 2$ nonzero rational matrix which, by Lemma \ref{lem:1x2}, is not rainbow regular.
That is, for arbitrarily large $k_0$ there exists $k>k_0$, $n$ and an equinumerous $k$-coloring of $[kn]$ such that if $x_iq_j-x_jq_i=0$, then $x_i$ and $x_j$ share a color.
Therefore $A$ is not rainbow regular.\\

\noindent{\emph{(iv)$\implies$(iii):}}
Suppose that there are two columns $i$ and $j$ of $A$ such that the submatrix $A'$ obtained by deleting them from $A$ has a different rank than $A$.

If $A$ has linearly dependent rows, we may remove them and pass to the corresponding submatrices of $A$ and $A'$ without changing the kernel.

Since $A'$ is a submatrix of $A$, $\rank(A')<\rank(A)$. Because $A$ and $A'$ have the same number of rows but $A'$ has smaller rank, the rows of $A'$ are linearly dependent.

Let $r_\ell$ and $r'_\ell$ be the $\ell$-th rows of $A$ and $A'$, respectively. Then there exist scalars 
$\alpha_1,\dots,\alpha_m$ not all zero, such that $\sum_{\ell=1}^m\alpha_\ell r'_\ell=0$. Consequently, 
the entries of $$r=\sum_{\ell=1}^m\alpha_\ell r_\ell$$ are zero except possibly for the $i$-th and $j$-th entries.
Let $q_i$ and $q_j$ be those entries. Then every $x,y\in\ker(A)$ satisfy $r\cdot x=r\cdot y=0$, thus $q_ix_i+q_jx_j=q_iy_i+q_jy_j=0$ and therefore $x_iy_j=x_jy_i$.\\

\noindent{\emph{(iii)$\implies$(ii):}}
Since each color class contains at least one element, then $(C-\varepsilon)\frac N{\sqrt k}\ge 1$ so that the hypothesis in \emph{(ii)} can be satisfied. Consequently, we need only prove that $\ker(A)$ contains a rainbow vector for every large enough $N$ independent of $k$.

We wish to find an upper bound for the number of vectors with entries in $[N]$ that are not rainbow. For this we introduce some new notation. If $i,j\le d$ then $A_{\widehat{i,j}}$ denotes the matrix obtained by removing the $i$-th column $A_i$ and the $j$-th column $A_j$ from $A$, and if $x$ is a vector then $x_{\widehat{i,j}}$ denotes the vector obtained by removing the $i$-th and $j$-th entries $x$.

If $x\in[N]^d\cap\ker(A)$ is not rainbow, then $x$ has two entries that share a color (they may have the same value). Assume $i$ and $j$ are these entries, and $z_1$ and $z_2$ are their respective values. Then $x_{\widehat{i,j}}$ must solve the equation
\begin{equation*}\label{eq:ij}
A_{\widehat{i,j}}y=z_1A_i+z_2A_j.
\end{equation*}
In other words, if this equation has no solutions in $[N]^{d-2}$ then there is no $x\in[N]^d\cap\ker(A)$ with $x_i=z_1$ and $x_j=z_2$.
If this equation has some solution $y_0$, then the set of solutions is simply $y_0+\ker(A_{\widehat{i,j}})$.

By assumption,
$\text{rank}(A_{\widehat{i,j}}) = \text{rank}(A)$, hence $\dim(\ker(A_{\widehat{i,j}}))=\dim(\ker(A))-2$.
Thus we only need to choose values for $\dim(\ker(A))-2$ of the remaining coordinates and the rest will be determined.
Since there are at most $N$ possible values for each coordinate, there are at most $N^{\dim(\ker(A))-2}$ ways to choose the remaining $d-2$ values of $x$.

Now we can bound the number of non-rainbow vectors in $[N]^d\cap \ker(A)$.
Since each color class contains at most $(C-\varepsilon)\frac N{\sqrt k}$ elements, there are at most $$k\left((C-\varepsilon)\frac N{\sqrt k}\right)^2=(C-\varepsilon)^2N^2$$ pairs of integers in $[N]$ that share a color.
Given two such integers $z_1$ and $z_2$, there are at most $\binom{d}{2}$ ways to place them in a vector $x\in[N]^d$.
Therefore there are at most
\begin{equation}\label{eq:bound}
(C-\varepsilon)^2N^2\binom{d}{2}N^{\dim(\ker(A))-2}=(C-\varepsilon)^2\binom d2N^{\dim(\ker(A))}
\end{equation}
non-rainbow vectors $x$.

Now we must bound the number of vectors in $[N^d]\cap\ker(A)$ from below.
Consider the polytope $P=[0,1]^d\cap\ker(A)$ and its interior $P^\circ$.
The number of vectors in $[N]^d\cap\ker(A)$ is bounded below by the number of integer vectors in $NP^\circ$.

Since $P$ is a rational polytope, the number of integer points in $NP$ is given by its Ehrhart quasi-polynomial $L_P(N)$ \cite[Chapter 3]{beckrobins}, which has degree $\dim(P)=\dim(\ker(A))$.
By Ehrhart-Macdonald reciprocity \cite[Chapter 4]{beckrobins}, the number of integer points in $NP^\circ$ is given by the quasi-polynomial $p(N)=(-1)^{\dim(P)}L_P\left(-N\right)$.
Let $\nu$ be the leading coefficient of $p$ (this is in fact the volume of the polytope $P$). Note that $\nu$ is a constant depending only on $A$.

Take $C=\sqrt{\frac\nu{\binom d2}}$. Then $p(N)$ is larger than the coefficient of $N^{\dim(\ker(A))}$ in \eqref{eq:bound} for all $\varepsilon>0$. Therefore, for sufficiently large $N$, the non-rainbow vectors do not cover $[N]^d\cap\ker(A)$.\\

\noindent{\em (ii)$\implies$(i):}
This follows immediately by taking $N=kn$.
\end{proof}


%
%

Now we show that as a consequence of Theorem \ref{thm:main} that we can deal with colorings of $\N$ where the color classes have bounded upper density:

\begin{proof} (of Corollary \ref{thm:cor1})
Let $A$ be a rational rainbow regular $m\times d$ matrix. By Theorem \ref{thm:main}, there is some $C:=C(A)$ such that for every $\varepsilon>0$, positive integer $N$, and large enough integer $k$, it follows that:  every $k$-coloring of $[N]$ in which each color class contains at most $(C-\varepsilon)\frac{N}{\sqrt k}$ elements,  contains a rainbow vector in $\ker (A)$.

Suppose $\N$ is $k$-colored such that each color class has upper density strictly less than $\frac{C}{\sqrt k}$.
Then there exists $N\in\N$ and small $\varepsilon>0$ such that for each color class $K\subseteq\N$ and all $n>N$, 
$$\frac{\#(K\cap[n])}{n}\le \frac{C}{\sqrt{k}}-\varepsilon.$$
Consequently, $\ker (A)$ contains a rainbow vector.
\end{proof}

We now move to the graph theory consequences:

\begin{proof} (of  Corollary \ref{thm:corgraph}) Let $G$ be a graph.

\noindent{$\implies$:}
Suppose each connected component of $G$ is $3$-edge-connected.
Then $G$ is bridgeless so by \cite{sixflow}, $G$ has a nowhere-zero $6$-flow.
Consequently, we may choose an orientation of $G$ which has a positive $6$-flow.
Let $M$ be the incidence matrix corresponding to that orientation.
Note that the positive $6$-flow, written as a vector, is an element of the  $\ker (M)$ with positive integer entries.

Consider a submatrix obtained by deleting two columns from $M$.
This corresponds to the subgraph obtained by deleting two edges from $G$.
Because $G$ is $3$-edge connected, the deleting  of two edges from $G$ does 
not change the number of connected components---and thus rank---of $G$.
Hence any submatrix obtained by deleting two columns from $M$ has the same rank as $M$.

Therefore $M$ is (robustly) rainbow regular by Theorem \ref{thm:main}.
That is,  there exists a constant $C$ depending only on $M$ (and thus $G$) such that  for every $\varepsilon>0$, positive integer $N$, and large enough integer $k$, it follows that: for every $k$-coloring of $[N]$ in which each color class contains at most $(C-\varepsilon)\frac{N}{\sqrt k}$ elements,
there is a rainbow vector in $\ker (M)$---which corresponds to a rainbow flow on the chosen orientation of $G$.

\noindent{$\Longleftarrow$:}
Suppose that for some orientation of $G$
there exists a constant $C$ depending only on the graph such that for every $\varepsilon>0$, positive integer $N$, and large enough integer $k$, it follows that: for every $k$-coloring of $[N]$ in which each color class contains at most $(C-\varepsilon)\frac{N}{\sqrt k}$ elements, there is a rainbow flow on that orientation of $G$. Then the incidence matrix $M$ corresponding to the chosen orientation of $G$ is robustly rainbow regular.
By theorem \ref{thm:main}, the rank of any submatrix obtained by deleting two columns from $M$ is the same as the rank of $M$.
This implies that the deletion of any two edges from $G$ does not change the rank---and thus the number of connected components---of $G$.
Therefore each connected component of $G$ is $3$-edge-connected.
\end{proof}

\section{Examples}

We would like to note that as a corollary of Theorem \ref{thm:main} all known examples of rainbow regular matrices are in fact robustly rainbow regular. This includes several well-known families such as matrices associated with arithmetic progressions. 
In this section we use Theorem \ref{thm:main} to analyze Fibonacci sequences and show that their associated matrices are rainbow regular (and thus robustly rainbow regular); we also give bounds for their rainbow number.

%
%
%
%
Here we look at sequences $p_1,\dots,p_d$ where $p_{i+2}=p_i+p_{i+1}$; we call these \emph{Fibonacci sequences}. In this case we use the $(d-2)\times d$ matrices
$$A_d=\begin{pmatrix}
1 & 1 & -1 & 0 & 0 & \dots & 0 & 0 \\
0 & 1 & 1 & -1 & 0 & \dots & 0 & 0 \\
0 & 0 & 1 & 1 & -1 & \dots & 0 & 0 \\
\vdots & \vdots & \ddots & \ddots & \ddots & \ddots & \vdots & \vdots \\
0 & 0 & 0 & 0 & 0 & \dots & -1 & 0\\
0 & 0 & 0 & 0 & 0 & \dots & 1 & -1
\end{pmatrix}.$$
To verify that this matrix is rainbow regular, we show that it satisfies the condition described in part \emph{(iv)} of Theorem \ref{thm:main}.

Let $F_i$ be the usual Fibonacci sequence with $F_0=0$ and $F_1=1$. Let $x=(F_1,\dots,F_d)$ and $y=(F_2,\dots,F_{d+1})$. The vector $x$ only has positive integer entries and it is easy to see that $x_i y_j=F_iF_{j-1}\neq F_jF_{i-1}=x_j y_i$.

Next, we present exponential bounds on the rainbow number $r(A_d)$.

Recall that for $d=3$, $A_3$ is the Schur equation and $r(A_3)=3$ (see \cite{probm}).

\begin{theorem}
For $d>3$, $F_{d+1}\leq r(A_d)\leq (d^2-d+1)F_{d-1}F_{d-2}$.
\end{theorem}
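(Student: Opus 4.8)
The plan is to establish the two bounds separately. For the lower bound $r(A_d) \geq F_{d+1}$, the idea is to exhibit, for every $k < F_{d+1}$, an equinumerous $k$-coloring of $[kn]$ (for suitable $n$) with no rainbow vector in $\ker(A_d)$. The key observation is that any vector $(p_1,\dots,p_d)$ in $\ker(A_d)$ with positive integer entries is a Fibonacci-type sequence, hence determined by $p_1, p_2$, and satisfies $p_i = F_{i-2}p_1 + F_{i-1}p_2$; in particular the smallest such primitive vector is $(F_1,\dots,F_d) = (1,1,2,3,\dots)$, whose entries span a range forcing $p_d \geq F_d$ and, more to the point, any rainbow solution needs $d$ distinct colors appearing among $d$ entries whose values are tightly constrained. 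The cleanest route is: color $[kn]$ by residues or by a greedy/block scheme so that any Fibonacci sequence of length $d$ lying in $[kn]$ must repeat a color; since a length-$d$ Fibonacci sequence starting at $(a,b)$ occupies a set whose largest entry is at least $F_{d-1}$ times roughly the smallest, one shows that with only $k \leq F_{d+1}-1$ colors one can always arrange a monochromatic coincidence among the $d$ entries. I would mimic the coloring idea from Lemma~\ref{lem:1x2} or use an explicit interval-coloring argument; the point is that $F_{d+1}$ is exactly the threshold where the pigeonhole on $d$ entries inside a geometrically-spread window stops failing.

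For the upper bound $r(A_d) \leq (d^2-d+1)F_{d-1}F_{d-2}$, the plan is to show that once $k \geq (d^2-d+1)F_{d-1}F_{d-2}$, every equinumerous $k$-coloring of $[kn]$ has a rainbow vector in $\ker(A_d)$. Here I would use the structure of $\ker(A_d)$: vectors $(p_1,\dots,p_d) \in \ker(A_d)$ with positive entries are exactly $p_i = F_{i-2}s + F_{i-1}t$ for positive integers $s,t$ (with $F_{-1}=1, F_0=0$), so the vector is parametrized by the pair $(s,t)$, i.e.\ by the first two entries $(p_1,p_2) = (s,t)$. Given an equinumerous $k$-coloring, I want to pick $(s,t)$ so that all $d$ values $p_1,\dots,p_d$ receive distinct colors. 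Count pairs $(s,t)$ in a suitable box and, for each ordered pair of indices $(i,j)$, bound the number of "bad" pairs $(s,t)$ for which $p_i$ and $p_j$ share a color: fixing the common color (at most $k$ choices for the value $p_i$, but really the constraint is that $p_i$ and $p_j$ both land in a single color class of size $n$) and using that the map $(s,t) \mapsto (p_i, p_j)$ is a bijection (its determinant is $\pm(F_{i-2}F_{j-1} - F_{j-2}F_{i-1}) = \pm 1$ by the Fibonacci identity), one gets at most $\binom{d}{2}\cdot(\text{something like }n^2)$ bad pairs versus a total of roughly $(\text{range})^2$ pairs. Choosing the range of $s,t$ so that $p_d = F_{d-2}s + F_{d-1}t \leq kn$ forces $s,t \lesssim kn/F_{d-1}$, and comparing the $\binom{d}{2}n^2$-type bad count against the $(kn/F_{d-1}F_{d-2})^2$-type total count yields a rainbow solution precisely when $k$ exceeds the stated constant. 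The constant $d^2-d+1 = 2\binom{d}{2}+1$ and the $F_{d-1}F_{d-2}$ factor come out of matching these two counts; tracking the exact constants through the inequalities is the main bookkeeping obstacle, and one must be careful that the "bad pair" count uses the equinumerous hypothesis (each color class has exactly $n$ elements) rather than an upper bound, so that the density argument is tight enough to reach $F_{d-1}F_{d-2}$ and not something larger.

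The main obstacle I expect is the lower bound: producing an explicit equinumerous coloring with no rainbow Fibonacci vector and verifying it works for all $k$ up to $F_{d+1}-1$ requires understanding exactly which color coincidences among $p_1,\dots,p_d$ are unavoidable, and the combinatorics of overlapping Fibonacci windows is more delicate than the single-equation case of Lemma~\ref{lem:1x2}. A workable fallback is a modular coloring: color $m \in [kn]$ by $m \bmod k$ (or by $\lfloor m \cdot k / (kn) \rfloor$ with a twist to make classes equal) and check that for $k < F_{d+1}$ some pair $p_i \equiv p_j$ is forced because the $d$ entries of the primitive generator, reduced mod $k$, cannot all be distinct — indeed $p_i - p_j$ is a $\Z$-combination of Fibonacci numbers and for $k < F_{d+1}$ one of these differences vanishes mod $k$ for a well-chosen shift. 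I would first pin down this modular statement, then fall back to a greedy coloring in the style of Lemma~\ref{lem:1x2} only if the clean modular argument does not give the sharp $F_{d+1}$.
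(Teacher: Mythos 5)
Your upper bound plan is essentially the paper's argument in different clothing. The paper takes the triangle $P=[0,1]^d\cap\ker(A_d)$, notes its $F_{d-1}F_{d-2}$-dilate is integral, and projects onto the first two coordinates to get a lattice-point bijection with the triangle with vertices $(0,0)$, $(F_{d-1},0)$, $(0,F_{d-2})$ --- which is exactly your parametrization of kernel vectors by $(p_1,p_2)=(s,t)$ subject to $F_{d-2}s+F_{d-1}t\le kn$. It then counts $\frac{F_{d-1}F_{d-2}}{2}t^2-\frac{F_d-1}{2}t$ interior lattice points and compares against the non-rainbow count $\binom{d}{2}kn^2$ inherited from \eqref{eq:bound}, which is precisely your ``bad pairs versus total pairs'' comparison; the constant $d^2-d+1$ falls out the same way. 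One small slip in your version: the determinant of $(s,t)\mapsto(p_i,p_j)$ is $\pm F_{j-i}$ by the Fibonacci identity, not $\pm1$ in general; but all you need is injectivity (nonzero determinant), which holds, so at most $n^2$ parameter pairs hit any product of two color classes and the count survives. So the upper bound is a sound outline, modulo the bookkeeping you yourself flag.

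The genuine gap is the lower bound, and it is not where you think the difficulty lies. You propose to construct, for each $k<F_{d+1}$, an explicit equinumerous coloring with no rainbow kernel vector, and you offer three candidate schemes (residue classes, blocks, a greedy coloring \`a la Lemma \ref{lem:1x2}) without verifying any of them; you explicitly defer the key step of showing that some pair $p_i,p_j$ is forced to collide. No construction is needed at all: the paper simply takes $n=1$, for which the only equinumerous $k$-coloring of $[k]$ is by singletons, so ``rainbow'' degenerates to ``all entries distinct,'' and a positive integer solution of $A_dx=0$ with distinct entries must have a large last entry, forcing $k\ge F_{d+1}$. That one observation replaces your entire program. (If you do pursue your route, be aware that the extremal distinct-entry Fibonacci sequence is not obviously $(1,2,3,5,\dots)$ --- the sequence $(2,1,3,4,7,\dots)$ also has distinct entries and a smaller maximum --- so pinning the threshold at exactly $F_{d+1}$ by a pigeonhole on residues is more delicate than your sketch suggests, and your modular fallback has no mechanism to produce that specific constant.) As written, the lower half of the statement is unproved.
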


\begin{proof}
For the lower bound, note that any rainbow solution $x=(x_1,\dots,x_d)$ of $A_d$ has $x_d\ge F_{d+1}$. So for $n=1$, $k\ge F_{d+1}$ and therefore $F_{d+1}\leq r(A_d)$.

For the upper bound, we compute an Ehrhart-like polynomial to refine the bounds in the proof of the \emph{(iii)$\implies$(ii)} part in Theorem \ref{thm:main}.

Consider the polytope $P=[0,1]^d\cap\ker(A_d)$.
The dimension of $\ker(A)$ is $2$, so $\dim(P)=2$. The following claim ensures that this is a triangle.

\begin{claim}
The vertices of $P=[0,1]^d\cap\ker(A_d)$ are $O=(0,0,\dots,0)$, $A=\frac{1}{F_{d-1}}(F_0,F_1,\dots,F_{d-1})$ and $B=\frac{1}{F_{d-2}}(1,F_0,\dots,F_{d-2})$.
\end{claim}

\begin{proof}
Suppose that $v=(v_1,\dots,v_d)$ is a vertex of $P$. Since $\dim(P)=2$ then two entries of $v$ are equal to either $0$ or $1$.
If both entries are $0$ then $v$ is the origin $O$. So assume $v$ contains an entry $v_i=1$.

Because $v$ is a Fibonacci sequence containing nonnegative elements then $v_j\le v_{j+1}$ for $j\ge 2$.
Consequently, if $v_i=1$, then $i\in\{1,d-1,d\}$, as otherwise $v_{i+2}\ge 2$.

If $v_1=1$ then $v_2=0$, as otherwise $v_3>1$.
In this case $v=(1,0,1,1,2,\dots)$, which is $B$ if $d=4$ and invalid otherwise.

If $v_{d-1}=1$ then $v_d=1$, as $v_d\ge v_{d-1}$.
In this case $v=(\dots,-1,1,0,1,1)$, which is $B$ if $d=4$ and invalid otherwise.

So assume $v_d=1$ and $v_1,v_{d-1}\neq 1$. No other entry of $v$ can be $1$, so some entry $v_j=0$.
If $j\ge 2$ then $v_2=0$ because $v_2\le v_j$, in this case $v=B$.
If $v_1=0$ then $v=A$.
\end{proof}

Therefore the dilation $(F_{d-1}F_{d-2})P$ is an integer polytope. Now we need to count the number $L_d(t)$ of lattice points in $(tF_{d-1}F_{d-2})P$ with positive entries.

Let $Q$ be the polytope obtained by projecting $(F_{d-1}F_{d-2})P$ onto its first two coordinates. It is a triangle with vertices $(0,0)$, $(F_{d-1},0)$, and $(0,F_{d-2})$.

\begin{claim}
For any $t\in\Z$, this projection gives a bijection between the lattice points in $(tF_{d-1}F_{d-2})P$ and the lattice points in $tQ$.
\end{claim}

\begin{proof}
Clearly the projection is injective.
Suppose $(a,b)$ is an integer point in the dilation $nQ$. 
Then $a,b\ge 0$ and $F_{d-2}a+F_{d-1}b\leq tF_{d-1}F_{d-2}$. Consider the Fibonacci sequence
$$(a,b,a+b,\dots,F_{d-2}a+F_{d-1}b).$$
This sequence is contained in $(tF_{d-1}F_{d-2})P$ and is projected to $(a,b)$.
\end{proof}

We may compute $L_d$ by counting the lattice points in $tQ$ with positive entries:
\begin{align*}
L_d(t)&=\frac{(tF_{d-1}+1)(tF_{d-2}+1)+(t+1)}2-(tF_{d-1}+1)-(tF_{d-2}+1)+1\\
&=\frac{F_{d-1}F_{d-2}}2t^2-\frac{F_d-1}{2}t.
\end{align*}

Now we need to show that if $k=(d^2-d+1)F_{d-1}F_{d-2}$ then for any equinumerous $k$-coloring of $[kn]$ there is a rainbow solution to $A_dx=0$.
From the computation in \eqref{eq:bound}, we know there is a solution whenever
\begin{align*}
L_d\left(\frac{kn}{F_{d-1}F_{d-2}}\right)&=L_d\left((d^2-d+1)n\right)\\
&=\frac{F_{d-1}F_{d-2}}2(d^2-d+1)^2n^2-\frac{F_d-1}{2}(d^2-d+1)n\\
&>\frac{d(d-1)}{2k}(kn)^2=\frac{F_{d-1}F_{d-2}}{2}d(d-1)(d^2-d+1)n^2.
\end{align*}
This is equivalent to
$F_{d-1}F_{d-2}n>F_d-1$,
which is true for all integers $d>3$ and $n\ge 1$.
\end{proof}

\section*{Acknowledgments}

We are grateful to UC MEXUS for the support received funding this research collaboration. We are also indebted to UC Davis and CINNMA for the hospitality and support received. The second author was supported by NSF grant DMS-0135345, the third author was supported by PAPIIT IA102013, the forth author was supported by PAPIIT IN101912 and the last three authors were supported by CONACyT project 166306. Last but not least we wish to thank Prof. Jacob Fox and Prof. Leslie Hogben for their helpful comments and suggestions.

\end{document}